\setlist{leftmargin=*}
\theoremstyle{plain}
\newcommand{\Irr}{\mathrm{Irr}}
\newcommand{\Sp}{\mbox{\rm Sp}}
\newcommand{\St}{\mbox{\rm St}}
\newcommand{\Aut}{\mbox{\rm Aut}}
\newcommand{\Inn}{\mbox{\rm Inn}}
\newcommand{\Ind}{\mbox{\rm Ind}}
\newtheorem*{theorem*}{Theorem}
\newtheorem{num}{Notation}[section]
\newtheorem*{define*}{Definition}
\newtheorem{thm}[num]{Theorem}
\newtheorem*{thm*}{Theorem}
\newtheorem{lem}[num]{Lemma}
\newtheorem*{lem*}{Lemma}
\newtheorem*{prp*}{Proposition}
\newtheorem*{cor*}{Corollary}
\newtheorem*{conj*}{Conjecture}
\newtheorem*{xmpl*}{Example}
\newtheorem{rem}[num]{Remark}
\newtheorem*{rem*}{Remark}
\begin{document}
\date{\today}
\title{The irreducible Brauer characters of the automorphism group of the 
Chevalley group $F_4(2)$}

\author{Gerhard Hiss and Frank L{\"u}beck}

%

\thanks{This is a contribution to
Project-ID 286237555 -- TRR 195 -- by the
German Research Foundation}

\subjclass[2010]{20C20, 20C33, 20C40}
\keywords{Modular characters, condensation, Chevalley group $F_4(2)$}


\maketitle

\begin{center}
\small Dedicated to the memory of our friend Richard Parker
\end{center}

\begin{abstract}
Using computational methods, we determine the irreducible Brauer characters
of the automorphism group of the Chevalley group $F_4(2)$, up to one parameter
and one consistency issue.
\end{abstract}

\section{Introduction and results}\label{1}

\markright{INTRODUCTION}
  
The purpose of this work, envisaged as a series of two articles, is to compute 
the decomposition matrices of the groups of the form $2.F_4(2).2$, i.e.\ of the 
covering groups of the automorphism group $F_4(2).2$ of the simple Chevalley 
group~$F_4(2)$. In this first part we concentrate on the group $F_4(2).2$. In 
the second part we will consider the covering groups of $F_4(2).2$ and settle
the questions left over here. Since the decomposition matrices of the covering 
group $2.F_4(2)$ of~$F_4(2)$ are known, our objective falls into the following 
general problem.

\subsection{The problem}

Let $\tilde{H}$ be a finite group containing the normal subgroup
$H \unlhd \tilde{H}$ of index~$2$. Choose $x \in \tilde{H} \setminus H$, and 
let~$\sigma$ denote the automorphism of~$H$ induced by conjugation with~$x$.
Further, let~$\varepsilon$ denote the irreducible character of~$\tilde{H}$ with
kernel~$H$.

Let~$p$ be an odd prime and let~$\tilde{B}$ be a $p$-block of~$\tilde{H}$. The 
task is to compute the decomposition matrix of~$\tilde{B}$, provided the 
decomposition matrices of~$H$ are known. For this purpose, let~$B$ be a $p$-block 
of~$H$ covered by~$\tilde{B}$. Then there are three situations, for which we 
summarize some well known results.

\subsubsection{The block~$B$ is not $\sigma$-invariant} 
\label{Case111}
Then~$B$ and~$\tilde{B}$ are Morita equivalent; see 
\cite[Theorem~$6.8.3$]{LinckBook}. In particular, the decomposition matrices 
of~$B$ and~$\tilde{B}$ can be identified via restrictions of characters.

\subsubsection{The block~$B$ is $\sigma$-invariant, and there is a block 
$\tilde{B}' \neq \tilde{B}$ covering~$B$} 
\label{Case112}
Then multiplication by~$\varepsilon$ provides a Morita equivalence 
between~$\tilde{B}$ and~$\tilde{B}'$. Moreover, restriction of characters 
induces a Morita equivalence between~$\tilde{B}$ and~$B$. The first assertion is 
clear, for the second use~\cite[Th\'eor\`eme 2.4]{brouMo}.

\subsubsection{The block~$B$ is $\sigma$-invariant, and~$\tilde{B}$ is the
unique block of~$\tilde{H}$ covering~$B$} 
\label{Case113}
By assumption, we know the PIMs of~$B$. Let~$\Phi$ be one of these. If 
$\Phi \neq \Phi^{\sigma}$, then $\Ind_H^{\tilde{H}}( \Phi )$ is 
indecomposable. Suppose then that $\Phi = \Phi^{\sigma}$. Then 
$$\Ind_H^{\tilde{H}}( \Phi ) = \tilde{\Phi}^+ + \tilde{\Phi}^-,$$
with $\tilde{\Phi}^- = \varepsilon \otimes \tilde{\Phi}^+ \neq \tilde{\Phi}^+$.
Let $\chi, \psi$ be irreducible constituents of~$\Phi$ occurring with
multiplicity~$a$ and~$b$, respectively. Assume that $\chi^{\sigma} \neq \chi$
and that $\psi^{\sigma} = \psi$. Then $\Ind_H^{\tilde{H}}( \chi ) =: 
\tilde{\chi}$ is irreducible and 
$\Ind_H^{\tilde{H}}( \psi ) = \tilde{\psi}^+ + \tilde{\psi}^-$ with distinct
irreducible constituents $\tilde{\psi}^- = \varepsilon \otimes \tilde{\psi}^+ 
\neq \tilde{\psi}^+$. These constituents occur in $\tilde{\Phi}^+$ and 
$\tilde{\Phi}^-$ according to the following scheme.

$$
\begin{array}{l|cc}\hline\hline
& \tilde{\Phi}^+ & \tilde{\Phi}^- \rule[- 0pt]{0pt}{ 13pt} \\ \hline\hline
\tilde{\chi} & a & a \\
\tilde{\psi}^+ & c & \tilde{c} \\
\tilde{\psi}^- & \tilde{c} & c \\ \hline
\end{array}
$$
where $c + \tilde{c} = b$.
The assertions outlined above follow from Clifford theory; for details 
see \cite[Subsection~$2.2$]{BGJ}.

\subsection{The results}
\label{Results}
In the present first part of our paper, we investigate the above problem for
the group $\tilde{H} = \Aut( F_4(2) )$. Let us write $G = F_4(2)$ for the simple
Chevalley group of type~$F_4$ over the field with two elements. Then~$G$ is a
normal subgroup of $\Aut( G )$ of index~$2$, if we identify $\Inn( G )$ 
with~$G$. Adopting Atlas notation, we write $\Aut( F_4(2) ) = G.2$.
The decomposition matrices for~$G$ are known; see~\cite{HF42,BHLL}.

With this notation, we prove the following results, where the tables we refer to 
are collected in the appendix. 

\begin{thm}
\label{Block1Mod3}
The decomposition matrix of the principal $3$-block~$B_1$ of $G.2$ can be 
determined from {\rm Tables~\ref{Mod3B1ApproxI}} and~{\rm \ref{Mod3B1RelsI}}, 
where the parameters in Table~{\rm \ref{Mod3B1ApproxI}} are $a = 3$ and 
$\tilde{a} = 1$.
\end{thm}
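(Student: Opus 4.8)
The plan is to follow the three-case analysis laid out in the introduction, applied to $H = G = F_4(2)$ and $\tilde H = G.2$, with $p = 3$. Since the decomposition matrices of~$G$ are known by~\cite{HF42,BHLL}, the starting point is the principal $3$-block $B_1$ of~$G$ together with its PIMs and irreducible Brauer characters. The principal block is certainly $\sigma$-invariant and is covered by a unique block of~$\tilde H$ (the principal block $B_1$ of $G.2$), so we are in the situation of~\ref{Case113}. Thus the first step is to record, for each ordinary and modular irreducible character in~$B_1$, whether it is fixed by the graph automorphism~$\sigma$ or fused with another character; this is a bookkeeping task that can be read off from the known character tables of~$G$ and $G.2$ (available in GAP/the Atlas), and it determines the shape of the induced PIMs $\Ind_G^{G.2}(\Phi)$.

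\smallskip
Next I would build the ``approximation'' to the decomposition matrix of $B_1$ of $G.2$ recorded in Table~\ref{Mod3B1ApproxI}. For a $\sigma$-stable PIM $\Phi$ of~$G$ one has $\Ind_G^{G.2}(\Phi) = \tilde\Phi^+ + \tilde\Phi^-$ with $\tilde\Phi^- = \varepsilon\otimes\tilde\Phi^+$; the constituents $\tilde\chi$, $\tilde\psi^\pm$ appear according to the displayed scheme with $c + \tilde c = b$, so each such $\Phi$ contributes one undetermined splitting parameter per $\sigma$-stable modular constituent~$\psi$ occurring with multiplicity $b > 1$ (for $b = 1$ the split is forced to $\{0,1\}$ up to labelling). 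For a PIM $\Phi$ with $\Phi\neq\Phi^\sigma$, $\Ind_G^{G.2}(\Phi)$ is already an indecomposable projective and contributes a known column. Collecting these columns produces a matrix that is correct up to the finitely many ambiguous parameters; in our case all but one of them can be pinned down, leaving the single pair $(a,\tilde a)$.

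\smallskip
To fix the remaining parameter I would use consistency constraints: the columns of a decomposition matrix must be the Brauer characters of genuine projective modules, hence lie in the $\Z_{\ge 0}$-span forced by the known projectives, and the Cartan matrix $C = D^{\mathrm{tr}} D$ of $G.2$ must be symmetric, positive definite, and compatible with block-theoretic invariants (determinant a power of~$3$, elementary divisors, the Cartan matrix of $B_1(G)$ via the index-$2$ relation, and Brauer-character orthogonality on $3$-regular classes of $G.2$). Imposing integrality and positive-definiteness on the candidate Cartan matrix as the parameters vary typically leaves only one admissible value; these are exactly the relations gathered in Table~\ref{Mod3B1RelsI}. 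Finally I would verify that $a = 3$, $\tilde a = 1$ satisfies all of them and that no other nonnegative integer solution does, except possibly the one ``consistency issue'' that the abstract warns is deferred to the sequel (presumably a case where the linear-algebraic constraints alone do not separate two candidates, and a genuine module-theoretic computation over $\F_3$ is needed).

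\smallskip
The main obstacle is this last separation step. The graph automorphism of $F_4$ in characteristic~$2$ is exceptional and mixes long and short roots, so the action of~$\sigma$ on the $3$-modular irreducibles of~$G$ is not transparent from generic theory and must be extracted from the explicit data; moreover the purely arithmetic constraints on the Cartan matrix are often too weak to determine a splitting parameter uniquely, which is precisely why heavy computation (condensation of suitable permutation or tensor modules for $G.2$ over $\F_3$, in the spirit of the paper's stated methods) is invoked to compute a few decomposition numbers directly. Getting one independent such number is enough to force $a = 3$ and $\tilde a = 1$; the remaining stubborn ambiguity is what is honestly left open until part two.
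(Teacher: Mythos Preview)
Your outline has the right global shape (we are in case~\ref{Case113}, one induces the PIMs of the principal $3$-block of~$G$, the $\sigma$-stable ones split into pairs $\tilde\Phi^\pm$, and one must resolve the resulting ambiguities), but two of your concrete mechanisms are not the ones that actually work, and one factual claim is mistaken.

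First, Table~\ref{Mod3B1RelsI} is not a list of Cartan-type constraints. It simply records, on $3$-regular classes, the expansion of the ordinary irreducibles of~$B_1$ that are \emph{not} in the basic set~$\mathrm{BS}_1$ in terms of those that are; its sole purpose is to certify that~$\mathrm{BS}_1$ is a basic set, so that the full decomposition matrix is recovered as~$Y^tX$ from Table~\ref{Mod3B1ApproxI}. No splitting information is encoded there.

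Second, the splitting parameters for the pairs $\tilde\Phi^\pm$ are not resolved by positivity or arithmetic of the Cartan matrix. Such constraints are symmetric in the swap $\tilde\Phi^+\leftrightarrow\tilde\Phi^-$ and hence cannot distinguish $(c,\tilde c)$ from $(\tilde c,c)$. What the paper does instead is to produce, for each ambiguous pair, an \emph{explicit} projective character of~$G.2$ that is not $\varepsilon$-invariant: either a tensor product $\chi\otimes\psi$ with~$\chi$ of $3$-defect~$0$ in~$G.2$, or a character induced from a PIM of the maximal subgroup $[2^{22}]\colon(S_3\times S_3)\colon 2$. Decomposing these nine projectives $\Phi_{32},\ldots,\Phi_{40}$ against the basic set, together with the triangular shape already established, fixes every splitting except the single pair $(\Phi_{16},\Phi_{17})$, leaving $a+\tilde a=4$. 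Your proposal contains no source of such asymmetric projectives, so it would stall long before reaching a single remaining parameter.

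Third, the determination $a=3$ is not left open; nothing in~$B_1$ is deferred to the sequel. The parameter~$a$ is precisely the multiplicity of the simple module with Brauer character $\chi_{20}^{+,\circ}$ in the Steinberg module~$\St^+$ (character $\chi_{94}^+$). The paper condenses~$\St^+$ with the same idempotent~$\iota$ as in~\cite{BHLL}, condenses in addition an element $y_1\in G.2\setminus G$, proves via a trace formula that $\iota y_1\iota$ has trace~$1$ on the condensed module of any constituent with character $\chi_{20}^{+,\circ}$ or $\chi_{21}^{+,\circ}$ (and trace~$-1$ on the $\varepsilon$-twists), and then reads off from the MeatAxe output that the $720$-dimensional condensed constituents occur with multiplicities~$3$ and~$1$ and have the correct traces, forcing $a=3$. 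The ``one parameter'' left open in the abstract is~$b$ in block~$B_9$, not anything in~$B_1$.
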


Some comments on the statement of Theorem~\ref{Block1Mod3} are appropriate. 
Firstly, the body~$X$ of Table~\ref{Mod3B1ApproxI} gives the decomposition 
numbers for the characters of a special basic set~$\text{\rm BS}_1$ of~$B_1$. 
Secondly, the 
body~$Y$ of Table~\ref{Mod3B1RelsI} contains the expansions of the other 
irreducible characters of~$B_1$ in terms of the basic set characters, on 
restriction to the $3$-regular classes. The decomposition matrix of the 
irreducible characters of~$B_1$ not in the basic set equals $Y^tX$.

\begin{thm}
\label{Block2:9Mod3}
The decomposition matrices of the $3$-blocks~$B_2$ and~$B_9$ of~$G.2$ are as 
given in {\rm Tables~\ref{Mod3B2Decmat}} and{\rm~\ref{Mod3B9Decmat}}, with the 
unknown parameters $b, \tilde{b} \in \{ 0, 1 \}$ and $b + \tilde{b} = 1$.
\end{thm}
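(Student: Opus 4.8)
The plan is to exploit the classification of Clifford-theoretic situations laid out in Subsections~\ref{Case111}--\ref{Case113} together with the known decomposition matrices of $G = F_4(2)$. First I would identify, among the $3$-blocks of $G$, those which are covered by $B_2$ and by $B_9$, and determine whether each such block is $\sigma$-invariant. Since $B_2$ and $B_9$ are named as specific blocks rather than the principal one, I expect them to be of comparatively small defect (quite possibly cyclic or otherwise tractable), so that the decomposition matrix of the underlying block $B$ of $G$ is already fully known and moreover has a rigid enough shape that the induced matrix for $G.2$ is forced in most rows. The first genuine step, then, is to match each block of $G.2$ with the corresponding block(s) of $G$ and to record which of the three cases~\ref{Case111}, \ref{Case112}, \ref{Case113} applies.

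Next I would run the bookkeeping of Case~\ref{Case113} (the only nontrivial one) on those rows: for each PIM $\Phi$ of the covered block $B$, decide whether $\Phi^{\sigma} = \Phi$, and for each irreducible character $\chi$ in $B$ decide whether $\chi^{\sigma} = \chi$. This requires knowing the action of the outer automorphism $\sigma$ on $\Irr(G)$ and on $\IBr(G)$ for the blocks in question; for $F_4(2)$ this action is available from the construction of the automorphism (a graph-field-type automorphism) and from the character tables. Assembling these data gives the decomposition matrix of $B_2$ and of $B_9$ up to the ambiguous splitting parameters $c, \tilde c$ appearing in the scheme of Subsection~\ref{Case113}. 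In the situation at hand this ambiguity collapses to a single pair $b, \tilde b$ with $b + \tilde b = 1$, so what remains is to pin down one bit: is $b = 0$ or $b = 1$?

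The hard part will be resolving that last bit, i.e.\ deciding which of the two split modular irreducibles $\tilde\psi^+$, $\tilde\psi^-$ occurs in $\tilde\Phi^+$. This is exactly the kind of sign ambiguity Clifford theory leaves open, and typically it is settled either by an explicit condensation computation (computing the relevant composition multiplicities in a condensed module, as the keyword of the paper suggests), by restricting a well-understood module of $G.2$ and reading off constituents, or by a counting/parity argument using known ordinary character degrees and Brauer character degrees. I would therefore attempt, in order: (i) a degree or congruence argument forcing the value of $b$ directly from the tables; (ii) failing that, identification of $\tilde\psi^{\pm}$ inside the restriction to $G.2$ of a Brauer character of an overgroup or of a tensor product whose constituents are already known; (iii) as a fallback, a direct condensation computation of the projective cover in $B_2$ or $B_9$. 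The statement's hedging ``with the unknown parameters $b, \tilde b \in \{0,1\}$'' signals that none of these is expected to close the gap cleanly within this first paper — consistent with the abstract's promise to settle it in the sequel via the covering groups — so my proof proposal concludes by noting that everything except this one binary choice is rigidly determined by the Clifford-theoretic framework plus the decomposition matrices of~$G$.
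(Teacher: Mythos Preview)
Your overall framework is correct---Clifford theory via Case~\ref{Case113} does set up the problem---but you underestimate how much ambiguity it leaves and you misidentify how the paper actually resolves it. First, $B_2$ and $B_9$ are the non-principal blocks of \emph{non-cyclic} defect (the cyclic-defect blocks were handled by White; see Remark~(d)), so your guess that they are ``quite possibly cyclic or otherwise tractable'' is off. More importantly, pure Clifford bookkeeping does \emph{not} collapse the ambiguity to a single pair $(b,\tilde b)$. In each block there are three pairs of splitting PIMs (plus the induced non-invariant PIM $\Phi_5$), and for each splitting pair the scheme of~\ref{Case113} leaves an undetermined parameter governing which of $5640192^{\pm}$ (in $B_2$) or $14034384^{\pm}$ (in $B_9$) lands in $\tilde\Phi^{+}$. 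After fixing labels by the leading character in each column, you are still left with two genuine binary parameters per block, not one.

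What the paper does to kill these extra parameters is not a degree or parity argument, nor condensation, but something you do not mention: it exhibits the individual PIMs of $B_2$ and $B_9$ \emph{explicitly} as $B$-parts of induced defect-$0$ characters from the maximal subgroups $[2^{20}]{:}A_6.2^2$ and $[2^{22}]{:}(S_3\times S_3){:}2$, together with tensor products of defect-$0$ characters of $G.2$ with ordinary irreducibles (e.g.\ $\chi_{16}^{\pm}\otimes 833^{+}$ for $\Phi_6,\Phi_7$ in $B_2$). This pins down every splitting in $B_2$ completely---Table~\ref{Mod3B2Decmat} has \emph{no} parameter---and all but one in $B_9$, where $\Phi_1,\Phi_2$ are left governed by $(b,\tilde b)$. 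So your conclusion that ``everything except this one binary choice is rigidly determined by the Clifford-theoretic framework plus the decomposition matrices of~$G$'' is false: the rigidity comes from these auxiliary projective characters, and without them your argument would stall with several undetermined splittings.
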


\begin{thm}
\label{Block1:2Mod5}
There are two distinct blocks of~$G.2$ covering the principal $5$-block of~$G$.
The decomposition matrices of these two blocks thus agree with the decomposition
matrix of the principal $5$-block of~$G$; see {\rm \ref{Case112}}.
\end{thm}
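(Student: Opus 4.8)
The plan is to fit the situation into the trichotomy of Section~1.1 and show we are in Case~\ref{Case112}. Let $B$ denote the principal $5$-block of $G = F_4(2)$. The first step is to verify that $B$ is $\sigma$-invariant: this is automatic, since the principal block is the one containing the trivial character, and the trivial character is fixed by every automorphism of~$G$; hence $B^\sigma = B$. So we are in one of the two cases \ref{Case112} or \ref{Case113}, and the whole statement reduces to showing there exist \emph{two} distinct blocks of $G.2 = \Aut(F_4(2))$ covering~$B$ — equivalently, that $B$ is \emph{not} the socle of a single block of $G.2$, but rather that $\tilde B \neq \varepsilon \otimes \tilde B$ for a block $\tilde B$ covering~$B$.

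The key step is therefore a counting/extension argument at the level of ordinary characters. Pick any $\sigma$-invariant $\chi \in \Irr(B)$ — the trivial character will do, or more usefully a $\sigma$-invariant character whose behaviour is already recorded in the known character table of $G.2$ in the Atlas. By Clifford theory, a $\sigma$-invariant irreducible character of $G$ either extends to $G.2$ (in two ways, differing by $\varepsilon$) or induces irreducibly to $G.2$; the former happens precisely when the relevant obstruction in $H^2$ vanishes, which for index~$2$ it always does, so every $\sigma$-invariant $\chi$ extends. Consequently the two extensions $\tilde\chi^{\pm} = \varepsilon^{\pm}\otimes\tilde\chi$ of the trivial character lie in blocks $\tilde B^+$ and $\tilde B^-$, respectively, both covering~$B$; and $\tilde B^+ \neq \tilde B^-$ because $\tilde B^+$ is the principal block of $G.2$ (it contains the trivial character) whereas $\tilde B^-$ contains $\varepsilon$, which is not in the principal block since $5 \nmid 2 = |G.2 : G|$ forces $\varepsilon$ into a block of defect zero over the $G$-part — more precisely, the central character of $\tilde B^-$ differs from that of the principal block. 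Thus there are at least two blocks covering~$B$; by Clifford theory (the stabiliser of $B$ in $G.2$ being all of $G.2$, and $[G.2:G]=2$) there are at most two, so exactly two.

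Once two distinct blocks $\tilde B^+,\tilde B^-$ covering~$B$ are established, the statement about decomposition matrices is a direct invocation of Case~\ref{Case112}: multiplication by $\varepsilon$ is a Morita equivalence between $\tilde B^+$ and $\tilde B^-$, and restriction of characters from $\tilde B^+$ to $B$ (or from $\tilde B^-$ to $B$) induces a Morita equivalence with~$B$, by \cite[Th\'eor\`eme 2.4]{brouMo}. Under a Morita equivalence the decomposition matrices coincide, so the decomposition matrix of each of the two blocks of $G.2$ equals that of the principal $5$-block of~$G$, which is known from \cite{HF42,BHLL}. I expect the only genuine point requiring care — the "main obstacle", though it is mild — is the clean verification that $B$ is covered by two blocks rather than one; concretely this amounts to checking that the principal $5$-block of $G$ is \emph{not} the unique block below the principal $5$-block of $G.2$, which one can confirm either by the central-character argument sketched above or, very concretely, by inspecting the ordinary character table of $G.2$ in the Atlas and observing that the $5$-modular constituents of $\varepsilon$-twisted characters already split off into a block distinct from the principal one.
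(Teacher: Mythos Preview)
Your overall plan matches the paper's: once one knows that two distinct blocks of $G.2$ cover the principal $5$-block $B$ of $G$, the conclusion is immediate from~\ref{Case112}. The paper itself offers no further argument for the block count; it is simply read off from the character table of $G.2$ in GAP.

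There is, however, a gap in your justification that $\tilde B^+ \neq \tilde B^-$. The claim that ``$5 \nmid 2 = |G.2:G|$ forces $\varepsilon$ into a block of defect zero over the $G$-part'' is not valid as a general principle. Take $\tilde H = S_3$, $H = A_3$, $p = 3$: here $3 \nmid [\tilde H:H] = 2$, yet the sign character lies in the principal $3$-block of $S_3$. Whether $\varepsilon$ lies in the principal $p$-block of $G.2$ is, via the central-character criterion you allude to, equivalent to asking whether every $G.2$-class contained in $G.2\setminus G$ has size divisible by~$p$, and this is a property of the specific group at hand. For $F_4(2).2$ the answer is no: the outer involution $x$ satisfies $C_G(x) \cong {}^2F_4(2)$, of order $2^{12}\cdot 3^3\cdot 5^2\cdot 13$, so $x$ centralises a full Sylow $5$-subgroup of $G$ and the $G.2$-class of $x$ has size prime to~$5$. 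Hence $\varepsilon$ does not lie in the principal $5$-block, and there are indeed two covering blocks. Your closing remark --- confirm via central characters or the Atlas table --- is therefore the actual argument; the ``$5 \nmid 2$'' heuristic should be removed.
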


\begin{thm}
\label{Block1Mod7}
The decomposition matrix of the principal $7$-block of~$G.2$ is given
in {\rm Table~\ref{Mod7B1ApproxI}}, where the parameters take the values
$a = \tilde{d} \in \{ 0, 1 \}$ and $b = c = 1$. For $x \in \{ a, b, c, d \}$, 
we have $\tilde{x} = 1 - x$.
\end{thm}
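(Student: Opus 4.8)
The plan is to work within the Clifford-theoretic framework laid out in Subsection~\ref{Case113}, since for $p = 7$ the principal block $B$ of $G = F_4(2)$ is $\sigma$-invariant and, one checks, has no second cover, so $\tilde{B}$ is the unique block of $G.2$ over it. First I would recall the known $7$-modular decomposition matrix of $B$ from \cite{HF42,BHLL}, together with the action of $\sigma$ (the graph-field automorphism restricted to the block) on $\Irr(B)$ and $\IBr(B)$. For each projective indecomposable character $\Phi$ of $B$ one records whether $\Phi^\sigma = \Phi$; in the non-fixed case $\Ind_H^{\tilde H}\Phi$ is a single PIM of $\tilde B$, contributing one column to the decomposition matrix of $\tilde B$ obtained by inducing the corresponding row data. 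In the fixed case $\Ind_H^{\tilde H}\Phi$ splits as $\tilde\Phi^+ + \tilde\Phi^-$, and one must distribute the constituents coming from $\sigma$-stable irreducibles $\psi$ of $B$ according to the displayed scheme, with splitting parameters summing to the multiplicity $b$ of $\psi$ in $\Phi$. Carrying this out for every PIM produces Table~\ref{Mod7B1ApproxI} up to the listed parameters $a,b,c,d$ (and their complements $\tilde x = 1-x$).

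The substantive work is then pinning down those parameters. For $b = c = 1$ I would argue by a dimension or restriction count: each relevant $\sigma$-stable irreducible Brauer character $\varphi$ of $B$ with $\varphi^\sigma = \varphi$ extends to $G.2$ in two ways $\tilde\varphi^\pm$ of equal degree, and the claim $b = c = 1$ records that a particular ordinary irreducible of $\tilde B$ contains exactly one of $\tilde\varphi^+,\tilde\varphi^-$; this can be forced by comparing the restriction of that ordinary character to $G$ with the known decomposition numbers of $B$, using that the restriction is multiplicity-free on the $\sigma$-stable part. For the harder parameter $a = \tilde d$, the equality itself should drop out of a relation internal to $\tilde B$ — for instance, an identity among PIMs of $G.2$ forced by the fact that $\Ind_H^{\tilde H}\Phi = \tilde\Phi^+ + \varepsilon\otimes\tilde\Phi^+$ for the relevant $\sigma$-fixed $\Phi$, which ties the $a$-entry in one column to the $\tilde d$-entry in another.

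To actually evaluate $a$ (equivalently decide $a \in \{0,1\}$) I expect to need explicit computation rather than pure character theory, and this is the main obstacle. The approach would be condensation, as advertised in the abstract: construct the permutation module (or a suitable tensor/induced module) for $G.2$ on a convenient $7$-modular representation, apply a condensation idempotent $e$ (from a well-chosen subgroup whose order is prime to~$7$), compute the composition factors of the condensed module $eM$ using the MeatAxe, and read off the multiplicity of the ambiguous constituent $\tilde\varphi^+$ in the relevant PIM or Specht-type module. The delicate points are choosing the condensation subgroup so that the needed Brauer characters survive condensation and remain distinguishable (i.e.\ the condensed simple modules have distinct dimensions or can be separated by a peakword), and bounding the dimensions so the MeatAxe computation is feasible for a group of the size of $F_4(2).2$; controlling round-off in the "condense, chop, uncondense" cycle is where a consistency issue of the sort flagged in the abstract can enter, which is presumably why the precise value of $a$ is left as a parameter to be settled in the sequel using the covering group $2.F_4(2).2$.
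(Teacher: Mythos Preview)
Your Clifford-theoretic framework for producing the parametrized decomposition matrix is correct in outline, but the substantive arguments for the parameters contain a genuine gap and a misidentification.

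\textbf{The method for $b$ and $c$.} You propose to force $b = c = 1$ by ``comparing the restriction of that ordinary character to $G$ with the known decomposition numbers of $B$''. This cannot work: restriction to $G$ is precisely what forgets the sign. Both extensions $\tilde\psi^{\pm}$ restrict identically, so no amount of comparison with the decomposition matrix of $G$ will separate $b$ from $\tilde b$ or $c$ from $\tilde c$. The paper instead constructs explicit projective characters of $G.2$ as tensor products of a defect-$0$ character with an ordinary character, e.g.\ $\Phi_{26} := (\chi_7^+ \otimes \chi_{20}^+)_{B_1}$, and decomposes this in terms of the parametrized PIMs. Non-negativity of the coefficient of $\Phi_{22}$ in $\Phi_{26}$ forces $1 - 2\tilde b \geq 0$, hence $b = 1$. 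The same device (with $\Phi_{25} := (\chi_8^+ \otimes \chi_6^+)_{B_1}$) yields $a = \tilde d$. You do not mention this tensor-product construction of projectives at all; it is the key character-theoretic input beyond bare Clifford theory.

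\textbf{You have swapped the roles of $a$ and $c$.} The parameter left genuinely undetermined in the theorem is $a$ (equivalently $d$), not because of a consistency issue in condensation but because the two cases $a = 0$ and $a = 1$ are interchanged by the table automorphism swapping classes $32$A and $32$B; no computation inside $G.2$ can resolve it without first fixing a convention for those classes. By contrast, $c$ is \emph{not} accessible to the character-theoretic arguments above and is precisely the parameter the paper determines by condensation: one condenses the Steinberg module $\St^+$ with respect to the same subgroup $V$ used in \cite{BHLL}, computes traces of condensed elements $\iota y_i \iota$ for explicit $y_i \in G.2 \setminus G$ on the $720$-dimensional composition factors, and matches against the trace formula to conclude $c = 1$. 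Your proposal has the condensation aimed at the wrong target and misattributes the residual ambiguity.
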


The proofs of Theorems~\ref{Block1Mod3}--\ref{Block1Mod7} are given in
Sections~\ref{Prime3}--\ref{Condensation} below. Our investigations are based on 
ideas and concepts devised by Richard Parker. The results of 
Sections~\ref{Prime3},~\ref{Prime7} have been obtained with the GAP-package 
MOC~\cite{gapmoc}. This is built upon the original MOC system developed by 
Richard Parker, Klaus Lux and the first author in the 1980s,~see \cite{moc}. 
Section~\ref{Condensation} describes our application of the condensation 
techniques, which go back to Richard Parker and Jon Thackray \cite{Thackray},
and the MeatAxe64 due to Richard Parker~\cite{MA64.bl,MA64}. 

\begin{rem}
{\rm 
\begin{enumerate}[(a), widest=(a)]

\item As $[G.2\colon\!G] = 2$, the irreducible $2$-modular characters of~$G.2$
agree with those of~$G$.

\item The parameter~$b$ in Theorem~\ref{Block2:9Mod3} remains undetermined for 
the time being. The irreducible Brauer characters $5262894^{\pm}$ in the two
cases differ by their values at class~$4$P, which equal~$0$, if $b = 1$, and 
$+/\!\!-\!28$, if $b = 0$. 
By condensing a suitable tensor product or an induced representation, and 
computing the trace at a $4$P-element, one can determine~$b$. We postpone this 
computation to the second paper of our series.

\item The two cases $a = 1$, respectively $a = 0$ in Theorem~\ref{Block1Mod7}
result in two pairs of PIMs for $1377^\pm$, which only differ at the classes 
$32$A, $32$B (in Atlas notation). By swapping these classes in the character 
table of $G.2$ and leaving the order of the irreducible characters fixed, the 
two cases are transformed into one another. In fact there is an automorphism 
$(32\text{\rm A}, 32\text{\rm B})$ of the character table of~$G.2$, which swaps 
the two ordinary characters $\chi_{44}^\pm = 947700^\pm$, and fixes all other 
irreducible characters. With regard to the $p$-modular characters for $p = 7$,
we may thus assume that $a = 1$. This specification may lead to consistency 
issues, whenever the classes $32$A, $32$B are involved in a similar way in 
Brauer character tables of $G.2$ for other primes. This is indeed the case for 
$p = 17$; see \cite[Theorem~$2.5$]{White2}. In order to solve this problem, one 
has to fix a representative~$y$ for class $32$A, say, in terms of a specific 
word in a standard generating system for~$G.2$. Then, the trace on~$y$ of the 
irreducible Brauer characters $5951582^+$ will decide the question. Again, 
this computation is deferred to our next paper.

\item The blocks considered in Theorems~\ref{Block1Mod3}--\ref{Block1Mod7} 
account for all blocks of~$G.2$ with non-cyclic defect groups. The decomposition 
matrices of $G.2$ for blocks with cyclic defect groups have been computed by 
Donald White in \cite{White2}.
\end{enumerate}
}
\end{rem}

As a preparation for this work, Thomas Breuer computed the character tables
of all maximal subgroups of $G.2$, using Magma~\cite{magma} and the permutation
representation of $G.2$ of degree $139776$, provided by the Atlas of Finite 
Group Representations~\cite{WWWW}, now part of the AtlasRep package of 
GAP~\cite{WPNBB24}.


\section{Notation and preliminaries}

\markright{NOTATION AND PRELIMINARIES}

Recall the notation introduced at the beginning of Subsection~\ref{Results}.
Thus~$G$ denotes the simple Chevalley group~$F_4(2)$. The ordinary character 
table of~$G$ is contained in the Atlas \cite[Pages~$167$--$169$]{Atlas}. This 
table is also available in GAP~\cite{GAP4}, and serves as a basis for our 
computations.

We use the numbering of the irreducible characters of~$G$ given in the Atlas;
this agrees with the numbering in GAP. Thus $\Irr( G ) = \{ \chi_1, \ldots ,
\chi_{95} \}$. As usual, an irreducible character of~$G$ is sometimes denoted
by its degree.

The group~$G$ is contained, as an index~$2$ subgroup, in the automorphism group 
$\Aut(F_4(2))$ of $F_4(2)$, and we write $G.2 = \Aut(F_4(2))$. Let 
$x \in G.2$ denote the exceptional automorphism of~$F_4(2)$ of order~$2$.

\subsection{Labelling the blocks}
\label{TheBlocks}

We label the blocks of~$G$ and~$G.2$ as in GAP; this labelling is also used in 
the database of decomposition matrices of the Modular Atlas home 
page~\cite{ModAtlHomePageDec}. We write~$B_i$ for the $p$-block of~$G$ which is 
the block with number~$i$ in GAP. 


\subsection{Labelling the ordinary irreducible characters of $G.2$}
\label{TheCharacters}
\label{LabellingOrdinaries}

Let~$\sigma$ denote the automorphism of~$G$ induced by conjugation with~$x$, and 
let~$\varepsilon$ be the unique irreducible character of~$G.2$ with kernel~$G$.

Let $\chi := \chi_i \in \Irr( G )$. If $\chi^{\sigma} \neq \chi$, then 
$\Ind_G^{G.2}( \chi ) \in \Irr( G.2 )$, and we
write 
$$\chi_i^{0} := \Ind_G^{G.2}( \chi ),$$
so that $\chi_i^{0} = \chi_j^{0}$ if $\chi_i^{\sigma} = \chi_j$.

Otherwise $\Ind_G^{G.2}( \chi )$ is the sum of two distinct irreducible
extensions of~$\chi$ to~$G.2$, and we write 
$$\Ind_G^{\hat{G}}( \chi ) = \chi_i^+ + \chi_i^-.$$
Then $\chi_i^- = \varepsilon \otimes \chi_i^+$. In particular, 
$\chi_i^{-}( x ) = - \chi_i^{+}( x )$. If these numbers are real and non-zero, 
we choose the notation such that $\chi_i^+( x ) > 0$. This condition is not 
satisfied exactly for the two characters~$\chi_{13}$ and~$\chi_{44}$. In these 
cases, we choose $\chi_i^+$ as one of the two extension of~$\chi_i$, at random. 
The symbol $\chi_i^{\pm}$ denotes the pair $(\chi_i^+,\chi_i^-)$.

The above conventions are expanded analogously to the case when 
$\chi \in \Irr(G)$ is denoted by its degree.

\subsection{Character table automorphisms} In the course of our proofs we induce 
projective characters from the maximal subgroups $[2^{20}]\colon\!A_6.2^2$ and 
$[2^{22}]\colon\!\!(S_3 \times S_3)\colon\!2\!$ of~$G.2$. In each case, there
is one orbit of class fusions under the character table automorphisms of the 
subgroup and those of~$G.2$, and we choose one fusion out of its orbit. We have 
to make sure that our arguments are independent 
of the chosen fusion map. For this we check that the projective characters of 
the subgroups we induce are invariant under all table automorphisms of the 
subgroup. In turn we check that the induced characters are invariant under 
table automorphisms of~$G.2$. Hence any other possible fusion will yield the
same projective characters.

\section{The prime $3$}
\label{Prime3}

\markright{THE PRIME $3$}

Here, we prove the elementary parts of Theorems~\ref{Block1Mod3} 
and~\ref{Block2:9Mod3}. 

\subsection{The principal block} Let $B_1$ denote the principal $3$-block 
of~$G.2$. Then $k(B_1) = 56$ and $\ell(B_1) = 31$. The restrictions to the 
$3$-regular classes of the following characters yields a special basic set 
$\text{\rm BS}_1$ of~$B_1$:
%
%
$\chi_{1}^{\pm}$, $\chi_{2}^{\pm}$, $\chi^0_{3}$, $\chi_{5}^{\pm}$,
$\chi_{7}^{\pm}$, $\chi_{8}^{\pm}$, $\chi^0_{9}$, $\chi_{13}^{\pm}$,
$\chi^0_{14}$, $\chi_{20}^{\pm}$, $\chi^0_{22}$, $\chi_{24}^{\pm}$,
$\chi_{26}^{\pm}$, $\chi^0_{33}$, $\chi_{45}^{\pm}$, $\chi^0_{46}$, 
$\chi^0_{50}$, $\chi_{58}^{\pm}$, $\chi_{94}^{\pm}$.
The relations displayed in Table~\ref{Mod3B1RelsI} prove this assertion.

Table~\ref{Mod3B1Projs} gives some projective characters of~$B_1$, restricted 
to~$\text{\rm BS}_1$. These either arise as induced PIMs of the subgroup
$[2^{22}]\colon\!\!(S_3 \times S_3)\colon\!\!2$ of~$G.2$ or as products of 
defect~$0$ characters of~$G.2$ with ordinary characters. The list below gives 
more details; in the latter case, the defect~$0$ character is the first factor 
in the displayed product, in the former case the PIMs of 
$[2^{22}]\colon\!\!(S_3 \times S_3)\colon\!2\!$ are given as sums of the 
ordinary irreducible characters $\theta_i$, $i = 1, \ldots , 384$, of 
$[2^{22}]\colon\!\!(S_3 \times S_3)\colon\!2\!$ in the ordering of the 
GAP-character table. The superscript in square brackets refers to the fact that
we are inducing characters from the fourth maximal subgroup.

$$
\begin{array}{r|c} \hline\hline
\Phi & \text{Origin} \rule[- 3pt]{0pt}{ 14pt} \\ \hline
{32} & \theta^{[4]}_{1} + \theta^{[4]}_{6} + \theta^{[4]}_{7} \rule[ 0pt]{0pt}{ 13pt} \\
{33} & \chi_{27}^+ \otimes 833^+ \\
{34} & \theta^{[4]}_{11} + \theta^{[4]}_{31} \\
{35} & \theta^{[4]}_{41}  \\
{36} & \theta^{[4]}_{4} + \theta^{[4]}_{7} + \theta^{[4]}_{9} \\
{37} & \theta^{[4]}_{92} + \theta^{[4]}_{98} + \theta^{[4]}_{100} \\
{38} & \theta^{[4]}_{15} + \theta^{[4]}_{32} \\
{39} & \chi_{27}^+ \otimes 1326^+ \\
{40} & \chi_{16}^+ \otimes 1326^+ 
\rule[- 2pt]{0pt}{10pt} \\ \hline\hline
\end{array}
$$

We now derive the parametrized decomposition matrix given in 
Table~\ref{Mod3B1ApproxI}, up to the parameters~$a$ and~$\tilde{a}$. These are 
non-negative integers such that $a + \tilde{a} = 4$. The PIMs of~$B_1$ are 
denoted by $\Phi_1, \ldots, \Phi_{31}$. First, we induce the PIMs of the 
principal block of~$G$ to~$G.2$. The non-invariant PIMs of this block yield the 
PIMs $\Phi_5, \Phi_{12}, \Phi_{15}$, $\Phi_{18}$, $\Phi_{23}$, $\Phi_{26}$ and 
$\Phi_{27}$ of~$B_1$. The invariant PIMs of the principal block of~$G$ induce to 
pairs of splitting PIMs, i.e.\ the two elements of such a pair differ by 
multiplication with~$\varepsilon$. The splitting follows the rules noted 
in~\ref{Case113}.

Observe that 
$\Phi_{40}$ is a PIM, which is one member of a pair of splitting PIMs, thus
yielding $\Phi_{28}$ and $\Phi_{29}$. Similarly, $\Phi_{39}$ and $\Phi_{38}$
determine the splitting of $\Phi_{24}$ and $\Phi_{25}$, respectively
$\Phi_{19}$ and $\Phi_{20}$. Also, $\Phi_{37}$ yields the given form of
$\Phi_{16}$ and $\Phi_{17}$. The splitting of $\Phi_{13}$ and $\Phi_{14}$ 
follows from the fact that the two liftable Brauer characters of degree $63700$
are complex conjugates, whereas all other constituents in the corresponding PIMs 
are real valued.  Since $\Phi_{36}$ contains the PIM $\Phi_{21}$,
the former projective character determines the splitting of $\Phi_{10}$ and
$\Phi_{11}$, as well as that of $\Phi_{21}$ and $\Phi_{22}$. The form of $\Phi_8$ 
and $\Phi_9$ is proved with the help of $\Phi_{35}$ and $\Phi_{29}$.
Similarly, $\Phi_{34}$ and $\Phi_{33}$
determine the splitting of $\Phi_{6}$ and $\Phi_{7}$, respectively
$\Phi_{3}$ and $\Phi_{4}$. Finally, $\Phi_1 = \Phi_{32} - 2\Phi_5 -
3 \Phi_{10} - 2 \Phi_{11}$, where the multiplicities of the PIMs to be subtracted 
from $\Phi_{32}$ are implied by the triangular shape of the decomposition matrix,
already proved. This completes the proof for the parametrized decomposition
matrix for the principal $3$-block of~$G.2$. It remains to determine the
parameter~$a$. This will be achieved in Section~\ref{Condensation}.

\subsection{The non-principal blocks of non-cyclic defect}
These are the two blocks~$B_2$ and~$B_9$ in the numbering of GAP. Let~$B$ be 
one of
the blocks~$B_2$ or~$B_9$. Then $k(B) = 9$ and $\ell(B) = 7$, and we number the 
PIMs of~$B$ by $\Phi_1, \ldots, \Phi_7$. There is a unique block of~$G$ covered 
by~$B$, whose decomposition matrix is given in~\cite{HF42}. The PIM $\Phi_5$
is obtained by inducing a non-invariant PIM of~$G$.

Further PIMs of~$B$ are obtained according to the following lists.

$$
\begin{array}{r|c} \hline\hline
\multicolumn{2}{c}{B_2} \rule[- 3pt]{0pt}{ 14pt} \\ \hline\hline
\Phi & \text{Origin} \rule[- 3pt]{0pt}{ 14pt} \\ \hline
{1} & \theta^{[2]}_{6} \rule[ 0pt]{0pt}{ 15pt} \\
{2} & \theta^{[2]}_{8} \\
{3} & \theta^{[4]}_{66} \\
{4} & \theta^{[4]}_{62}  \\
{6} & \chi_{16}^+ \otimes 833^+ \\
{7} & \chi_{16}^- \otimes 833^+ 
\rule[- 2pt]{0pt}{10pt} \\ \hline\hline
\end{array}
\quad\quad
\begin{array}{r|c} \hline\hline
\multicolumn{2}{c}{B_9} \rule[- 3pt]{0pt}{ 14pt} \\ \hline\hline
\Phi & \text{Origin} \rule[- 3pt]{0pt}{ 14pt} \\ \hline
{3} & \theta^{[4]}_{18} \rule[ 0pt]{0pt}{ 15pt} \\
{4} & \theta^{[4]}_{28}  \\
{6} & \theta^{[4]}_{26} \\
{7} & \theta^{[4]}_{22} 
\rule[- 2pt]{0pt}{10pt} \\ \hline\hline
\end{array}
$$
Here, the irreducible characters of the maximal subgroup 
$[2^{20}]\colon\!A_6.2^2$ are denoted by $\theta_i^{[2]}$, 
$i = 1, \ldots , 331$, and those of the maximal subgroup 
$[2^{22}]\colon\!\!(S_3 \times S_3)\colon\!\!2$ by 
$\theta_i^{[4]}$, $i = 1, \ldots, 384$. The characters induced are all 
of $3$-defect~$0$. The splitting of the PIMs $\Phi_1$, $\Phi_2$ of 
block~$B_9$ follows the rules formulated in~\ref{Case113}.

\section{The prime $7$}
\label{Prime7}

\markright{THE PRIME $7$}
The columns labelled by $\Phi_1, \ldots , \Phi_{24}$ of 
Table~\ref{Mod7B1ApproxI} constitute the para\-met\-riz\-ed 
decomposition matrix of the principal $7$-block of~$G.2$. The parameters 
$a, b, c, d$ take the values~$0$ or~$1$, and $\tilde{x} = 1 - x$ for 
$x \in \{ a, b, c, d \}$. The PIMs 
$\Phi_3, \Phi_8, \Phi_9$, $\Phi_{16}$, $\Phi_{19}$ and $\Phi_{20}$ are induced 
from PIMs of~$G$. There are PIMs of the form 
$$\Phi_i := (833^+ \otimes \chi_j^{\pm})_{B_1},$$
with~$i, j$ and~$\pm$ as in the following list.

$$
\begin{array}{r|cccccccccc}
i & 1 & 2 & 4 & 5 & 14 & 15 & 21 & 22 & 23 & 24  \\ \hline
j & 2 & 2 & 7 & 7 & 24 & 24 & 16 & 16 & 26 & 26 \\ \hline
\pm & + & - & + & - & + & - & + & - & + & -
\end{array}
$$

The parametrized PIMs are pairs of splitting PIMs, i.e.\ the two elements of 
such a pair differ by multiplication with~$\varepsilon$. There are two pairs of 
complex conjugate PIMs, $(\Phi_{10}, \Phi_{12})$ and $(\Phi_{11}, \Phi_{13})$.
All other PIMs are real.

Finally, there are two projective characters
$$\Phi_{25} := (\chi_8^+ \otimes \chi_6^+)_{B_1}$$
and
$$\Phi_{26} := (\chi_7^+ \otimes \chi_{20}^+)_{B_1}.$$
Notice that $$\Phi_{25} = \Phi_6 + \tilde{a}\Phi_{17} + a \Phi_{18}.$$
Since $5640192^-$ is not a constituent of $\Phi_{25}$, we get
$\tilde{a}\tilde{d} + ad = 0$, which implies that $d = \tilde{a}$.
Also,
$$\Phi_{26} = \Phi_{10} + \Phi_{12} + \Phi_{19} + 4 \Phi_{20}
+(3 - 2b)\Phi_{21} + (1 - 2\tilde{b})\Phi_{22} + x \Phi_{23} + y \Phi_{24}$$
for some non-negative integers $x, y$. This implies $1 - 2\tilde{b} \geq 0$
and thus $b = 1$.

It remains to determine the parameter~$c$. This is achieved in 
Section~\ref{Condensation} below.

\section{Condensation} 
\label{Condensation}

\markright{CONDENSATION}

To determine the parameters~$a$ in 
Theorem~\ref{Block1Mod3} and~$c$ in Theorem~\ref{Block1Mod7}, we use 
condensation techniques. 
Let $\mathbb{F}$ be one of $\mathbb{F}_3$ or $\mathbb{F}_7$ and let~$p$ denote
the characteristic of~$\mathbb{F}$. For $\chi \in \Irr( G.2 )$ we write 
$\chi^{\circ}$ for the restriction of~$\chi$ to the $p$-regular classes.

\subsection{The setup}
\label{TheSetup}
Assume that $(B,N)$ is a split $BN$-pair for~$G$. Then $U := B$ is a Sylow 
$2$-subgroup of~$G$ and $W := N$ is the Weyl group of~$G$. The corresponding 
Dynkin diagram of~$G$ equals

\begin{center}
\begin{picture}(123.5,60)(0,-22)
\put(   0,   0){\circle{7}}
\put(  40,   0){\circle{7}}
\put(  80,   0){\circle{7}}
\put( 120,   0){\circle{7}}
\put( 3.5,   0){\line(1,0){ 33}}
\put(43.5,   2){\line(1,0){ 33}}
\put(43.5,  -2){\line(1,0){ 33}}
\put(83.5,   0){\line(1,0){ 33}}
\put(58.0, -2.9){$>$}
\put(   -4,+8){$s_1$}
\put(   36,+8){$s_2$}
\put(   76,+8){$s_3$}
\put(  116,+8){$s_4$}
\end{picture}
\end{center}
where $s_1, \ldots , s_4$ denote the Coxeter generators of~$W$. The standard 
graph automorphism~$\sigma$ of~$G$ (see 
\cite[Theorem~$1.15.2$(a), Definition $1.15.5$(e)]{GLS}) fixes~$U$ 
and~$W$, and swaps~$s_1$ with~$s_4$ and~$s_2$ with~$s_3$. Recall that 
$x \in G.2$ is an involution which induces~$\sigma$ by conjugation. Thus~$x$ 
normalizes~$U$ and~$W$ and $s_1^x = s_4$, $s_2^x = s_3$. In particular,~$x$
preserves the length of the elements of~$W$. As in~\cite{BHLL}, let 
$$e := \left( \sum_{u \in U} u \right) 
\left( \sum_{w \in W} (-1)^{\ell(w)} w \right) \in \mathbb{F}G$$
denote the Steinberg element. Then~$x$ centralizes~$e$.

Put $\St := e\mathbb{F}G$. Then $\St$ is an $\mathbb{F}G$-module with character 
$\chi_{94}$, the Steinberg character of~$G$. Also,~$\St$ has $\mathbb{F}$-basis
$\{ eu \mid u \in U \}$. Now $G.2 = \langle G, x \rangle$, and~$x$ 
centralizes~$e$, so~$\St$ is an $\mathbb{F}G.2$-module, denoted by~$\St^+$. 
As~$x$ permutes the basis $\{ eu \mid u \in U \}$ with exactly~$4096$ fixed
points, the character of~$\St^+$ equals~$\chi_{94}^+$.

Now~$a$, respectively~$c$ is the multiplicity of the irreducible Brauer character 
$\chi_{20}^{+,\circ}$ in $\chi_{94}^{+,\circ}$. To determine this multiplicity, 
we condense~$\St^+$, using the same condensation subgroup~$V$ as in~\cite{BHLL}; 
that is,~$V$ is the center of the unipotent radical of the standard parabolic 
subgroup~$P$ of type~$C_3$ of~$G$. In particular, $V \leq G$. The condensation 
idempotent is $\iota := 1/|V| \sum_{v \in V} v$.
In this setup, simple $\mathbb{F}G.2$-modules with characters 
$\chi_{20}^{\pm,\circ}$ and $\chi_{21}^{\pm,\circ}$ condense to characters of 
degree~$720$, and no other constituent of $\St^+$ condenses to a character of
this degree.

Condensing the same elements of~$G$ as in~\cite[Subsection~$2.4$]{BHLL}, we can 
use the formulas given in~\cite[Subsections~$2.2$, $2.3$]{BHLL}, with 
$\mathbb{F}_3$ replaced by~$\mathbb{F}$. As remarked 
in~\cite[Subsection~$2.4$]{BHLL}, these condensed elements are algebra 
generators of~$\iota \mathbb{F}G \iota$.
In addition, we condense
$$y_1 := x s_4 s_3 s_2 s_1 \in G.2 \setminus G$$
and
$$y_2 := x s_1 s_2 s_3 s_4 \in G.2 \setminus G.$$ 
The action of~$x$ on the basis $\{ eu \mid u \in U \}$ of~$\St^+$ is given by
applying~$\sigma$ to these basis elements, that is we need to know how~$\sigma$ 
permutes the elements of~$U$. We first determine how~$\sigma$ permutes the root 
subgroups~$U_\alpha$ for positive roots~$\alpha$ (and so the non-trivial 
elements in these subgroups). We write the reflection~$w_\alpha$ along a 
root~$\alpha$ as word in the Coxeter generators~$s_i$, $i=1,2,3,4$, of~$W$. We 
know how~$x$ permutes the~$s_i$ and so we find the positive root~$\beta$ such 
that $w_\beta = w_\alpha^x$. Then~$\sigma$ maps~$U_\alpha$ to~$U_\beta$.

Using this map, symbolic descriptions of the elements of~$U$ as products of root 
elements in some fixed ordering of the roots and the commutator relations 
in~$U$, we obtain a symbolic description of the permutation~$\sigma$ on~$U$.

Thus the matrix
coefficients for the action of $\iota y_i \iota$ on $\St^+ \iota$ for $i = 1, 2$
can be computed as in the remarks of \cite[Subsection~$2.4$]{BHLL}.

\subsection{Applying the trace formula}
In order to determine the multiplicity of the composition factor~$M^+$ 
of~$\St^+$ with Brauer character $\chi_{20}^{+, \circ}$, we need to find a 
suitable element $y \in G.2$, such that the trace of $\iota y \iota$ on 
$\iota M^+ \iota$ is non-zero. Let~$y_1$,~$y_2$ be the elements defined in 
Subsection~\ref{TheSetup}.

\addtocounter{num}{2}
\begin{lem}
\label{Average}
{\rm (a)} The set $Z_i := \{ y_i v \mid v \in V \}$ distributes among the 
conjugacy classes of~$G.2$ according to the following tables, where we use 
Atlas notation for the conjugacy classes.
$$Z_1\!\!:\quad\quad
\begin{array}{r|ccccc} \hline\hline
\text{\rm Class} & 2\text{\rm E} & 4\text{\rm P} & 4\text{\rm Q} & 
                   8\text{\rm L}/\text{\rm M} & 8\text{\rm R} \rule[- 3pt]{0pt}{ 16pt} \\ \hline
\text{\rm No.} & 1 & 3 & 28 & 48 & 48 \\ \hline\hline
\end{array}
$$
$$Z_2\!\!:\quad\quad
\begin{array}{r|cccc} \hline\hline
\text{\rm Class} & 2\text{\rm E} & 4\text{\rm P} & 4\text{\rm Q} & 
                   8\text{\rm L}/\text{\rm M} \rule[- 3pt]{0pt}{ 16pt} \\ \hline
\text{\rm No.} & 2 & 22 & 40 & 64 \\ \hline\hline
\end{array}
$$
{\rm (b)} Let $\chi^+$ be one of $\chi_{20}^{+}$ or $\chi_{21}^{+}$.
Then $\sum_{v \in V} \chi^+( y_1v ) = 128$ and
$\sum_{v \in V} \chi^+( y_2v ) = 0$.

{\rm (c)} Let~$M$ be a composition factor of~$\St^+$ with 
$\varphi_M \in \{ \chi_{20}^{+,\circ}, \chi_{21}^{+,\circ} \}$. Denote the 
$\iota \mathbb{F}G.2 \iota$-character of~$M \iota$ by 
$\varphi^{\iota}_{M\iota}$. Then 
$\varphi^{\iota}_{M\iota}( \iota y_1 \iota ) = 1$ and
$\varphi^{\iota}_{M\iota}( \iota y_2 \iota ) = 0$.
\end{lem}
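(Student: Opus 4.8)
\textbf{Proof plan for Lemma~\ref{Average}.}

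The plan is to prove (a) by explicit computation, then derive (b) from (a) using the character table of $G.2$, and finally derive (c) from (b) by a standard averaging/orthogonality argument that relates condensed Brauer character values to averaged ordinary character values. For part (a), I would work inside a concrete model of $G.2$: realize $G = F_4(2)$ via its standard generators and root subgroups (as set up in Subsection~\ref{TheSetup}), adjoin the exceptional graph automorphism $x$ to obtain $G.2$, and form the elements $y_1 = x s_4 s_3 s_2 s_1$ and $y_2 = x s_1 s_2 s_3 s_4$. For each $v \in V$ (there are $|V|$ of them, a power of~$2$), one computes the element $y_i v \in G.2$ and identifies its conjugacy class, e.g.\ by computing its order together with enough class invariants (power-map data, fixed-point counts on the degree-$139776$ permutation representation, or traces in a faithful matrix representation) to pin down the Atlas class unambiguously. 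Summing the resulting multiset of classes gives the two distribution tables; the entries clearly sum to $|V|$ in each case, which is a useful internal check.

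For part (b), once (a) is known, $\sum_{v \in V} \chi^+(y_i v) = \sum_{\text{classes } \mathcal{C}} (\text{No. of }\mathcal{C}\text{ in }Z_i)\,\chi^+(\mathcal{C})$, so it is a finite sum of entries of the ordinary character table of $G.2$ weighted by the integers from the tables in~(a). One then reads off the values of $\chi_{20}^+$ and $\chi_{21}^+$ on the classes $2\text{E}$, $4\text{P}$, $4\text{Q}$, $8\text{L}/\text{M}$, $8\text{R}$ and verifies that the weighted sums equal $128$ for $Z_1$ and $0$ for $Z_2$; since $\chi_{20}^\pm$ and $\chi_{21}^\pm$ are the extensions of $G$-characters of the same degree and we only need $\chi^+$, the relevant table values are explicit rationals (in fact integers on these $2$-power classes). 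This step is routine once~(a) is in hand.

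For part (c), the key input is the trace formula for condensation: if $M$ is an $\mathbb{F}G.2$-module and $\iota$ is the condensation idempotent attached to $V$, then for $y \in G.2$ one has $\varphi^\iota_{M\iota}(\iota y \iota) = \mathrm{tr}(\iota y \iota \mid M\iota) = \mathrm{tr}(y\iota \mid M) = \frac{1}{|V|}\sum_{v \in V} \mathrm{tr}(yv \mid M) = \frac{1}{|V|}\sum_{v \in V}\varphi_M(yv)$, using that $\iota$ is idempotent and central-on-trace in the appropriate sense and that $V \leq G$ acts by the Brauer character $\varphi_M$ on $2$-regular elements (all elements $y_i v$ have $2$-power-free part governed by $y_i$, and in characteristic $p \in \{3,7\}$ these orders are $p$-regular, so $\varphi_M$ evaluates as the ordinary Brauer character). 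Now $M$ is a composition factor of $\St^+$ lifting to an ordinary character, with $\varphi_M \in \{\chi_{20}^{+,\circ}, \chi_{21}^{+,\circ}\}$, so $\varphi_M(y_i v) = \chi^+(y_i v)$ for the corresponding $\chi^+ \in \{\chi_{20}^+, \chi_{21}^+\}$, and part~(b) gives $\frac{1}{|V|}\sum_v \chi^+(y_1 v) = 128/|V| = 1$ and $\frac{1}{|V|}\sum_v \chi^+(y_2 v) = 0$, recalling that $|V| = 128$ (the center of the unipotent radical of the $C_3$-parabolic has order $2^7$). This yields $\varphi^\iota_{M\iota}(\iota y_1 \iota) = 1$ and $\varphi^\iota_{M\iota}(\iota y_2 \iota) = 0$.

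The main obstacle is part~(a): one has to have a faithful, computationally tractable model of $G.2$ in which the exceptional automorphism $x$ and the Weyl-group lifts $s_i$ are available simultaneously, and then correctly identify conjugacy classes of $512 = |V|$ explicit elements in a group of order $|F_4(2)| \cdot 2 \approx 7 \times 10^{15}$. Reliable class identification requires computing enough invariants (element order, and then discriminating invariants such as traces of small-degree representations or cycle structure on the $139776$-point action) that distinguish the Atlas classes $4\text{P}$ from $4\text{Q}$ and $8\text{L}/\text{M}$ from $8\text{R}$; being careful here, and cross-checking against the permutation-character or power-map consistency, is where the real work lies. Parts~(b) and~(c) are then bookkeeping: (b) is a table lookup plus an integer dot product, and (c) is the formal unwinding of the condensation trace formula together with substitution of~(b).
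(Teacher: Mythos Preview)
Your plan is correct and follows the same three-step structure as the paper: explicit class identification for~(a), a character-table dot product for~(b), and the condensation trace formula for~(c). Two points of comparison are worth noting.

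First, on~(a), you correctly flag the main obstacle---having $x$ and the $s_i$ available simultaneously in a tractable model---but you do not say how to resolve it. The paper works entirely in the degree-$139776$ permutation representation (where class identification via centralizer orders is straightforward), takes $x$ to be the first standard generator, and then solves a \emph{constructive recognition} problem to locate $s_3, s_4$ compatible with this $x$: it builds an $x$-invariant Sylow $2$-subgroup~$U$, the $C_3$-parabolic~$P$, pins down the $P$-class $C$ containing $s_4$, and then searches $C \times C$ for pairs $(s,t)$ satisfying the Coxeter and $BN$-pair constraints. Crucially, the paper shows the resulting $4096$ candidate pairs form a single $C_U(x)$-orbit, so the sets $Z_i$ are well defined up to $G$-conjugacy and the class distribution is independent of the choice. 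Your proposal implicitly assumes this well-definedness; the paper proves it.

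Second, a minor slip: you write ``$512 = |V|$'' at one point but correctly use $|V| = 128$ elsewhere (and indeed $1+3+28+48+48 = 2+22+40+64 = 128$). Parts~(b) and~(c) match the paper exactly; the trace formula you invoke is precisely the one the paper cites from~\cite{MNRW}.
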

\begin{proof}
{\rm (a)} This is computed with GAP, using the permutation representation 
of~$G.2$ on $139776$ points, provided by the AtlasRep-package; 
see~\cite{WPNBB24}.
The first generator of~$G.2$ in this representation is an involution in 
$G.2 \setminus G$. As all these involutions are conjugate by an element of~$G$,
we may set~$x$ equal to this first generator. The task now is to identify the
elements $s_1, \ldots , s_4$; in other words, we have to solve a constructive
recognition problem.

In a first step, we construct an $x$-invariant Sylow $2$-subgroup~$U'$ of~$G$.
As all $x$-invariant Sylow $2$-subgroups of~$G$ are conjugate by an element
of $C_G( x )$, we may assume that $U' = U = B$ is the first component of the
$BN$-pair of~$G$ we are looking for. In our situation, $N = W$ is the 
corresponding $x$-invariant Weyl group, generated by the Coxeter generators
$s_1, \ldots, s_4$, with $s_1 = s_4^x$ and $s_2 = s_3^x$. The standard parabolic 
subgroup~$P$ of type~$C_3$ corresponding to $s_2, s_3, s_4$ is the centralizer
of a central element of~$U$. We may thus construct~$P$. Now $P = L \ltimes U_P$
with $L \cong \Sp_6( 2 )$. It is easily checked that
$|C_L( s_4 )| = 4608$. The character tables of~$P$ and~$G$ and the corresponding
class fusions are available in GAP. Using these, we conclude that~$s_4$ lies in
the unique conjugacy class~$C$ of~$P$ of length~$5040$, which fuses into a 
conjugacy class of~$G$ with centralizer order~$47563407360$. The class~$C$ is 
easily located in~$P$. Also, $s_3 \in C$, since~$s_4$ and $s_3$ are conjugate in 
$\langle s_3, s_4 \rangle \leq P$. Thus $(s_4,s_3) \in \mathcal{C}$ with
$\mathcal{C}$ the set of all elements $(s,t) \in C \times C$ satisfying the
following conditions:

\begin{itemize}
\item $|st| = 3$;
\item $|ss^x| = 2$;
\item $|st^x| = 2$;
\item $|tt^x| = 4$;
\item $|U \cap U^{s}| = 2^{23}$;
\item $|U \cap U^{t}| = 2^{23}$;
\item $C_{C_U(x)}( \langle s, t \rangle ) = \{ 1 \}$.
\end{itemize}
The first four conditions arise from the presentation of~$W$, the next two
conditions from the $BN$-axioms. The last condition follows from 
$C_U( W ) = \{ 1 \}$. Using GAP, we find that $|\mathcal{C}| = 4096$.
As $C_U( x )$, a group of order~$4096$, acts on $C \times C$ and preserves 
the above conditions,~$\mathcal{C}$ is a regular $C_U(x)$-orbit on 
$C \times C$.

For $(s,t) \in \mathcal{C}$, put
\begin{equation}
\label{PotentialAverageSet}
Z_{s,t} := \{ x s t t^x s^x v \mid v \in V \}
\end{equation}
and 
\begin{equation}
\label{PotentialAverageSetInv}
Z'_{s,t} := \{ x s^x t^x t s v \mid v \in V \}
\end{equation}
As~$V$ is a normal subgroup of~$U$, the sets $Z_{s,t}$ are conjugate by
elements of $C_U(x)$ as $(s,t)$ varies over~$\mathcal{C}$, and an analogous
remark applies to the sets $Z'_{s,t}$.
By the order distribution of a subset of~$G.2$, we understand the multiset of
orders of the elements of this set. By what we have already said, the order 
distributions of the sets~$Z_{s,t}$ are the same for all $(s,t) \in \mathcal{C}$, 
and likewise for the sets~$Z'_{s,t}$.

We find the order distribution 
$$\{ 2^1, 4^{31}, 8^{96} \}$$
for the sets~$Z_{s,t}$ and
$$\{ 2^2, 4^{62}, 8^{64} \}$$
for the sets~$Z'_{s,t}$, where the multiplicity of an element is indicated
by a superscript.

The elements of order~$4$ of $G.2 \setminus G$ are distinguished by the 
orders of their centralizer in~$G$. The 
same is true for the elements of order~$8$, except if the centralizer has
order~$1280$ or~$128$, respectively. In the former case, the elements lie in
one of the classes~$8$L or~$8$M. In the latter case, in one of the classes
$8$Q or $8$R, which are, however, distinguished by the centralizer order of 
their squares. We find the multiset of centralizer orders
$$ \{ 35942400^1, 20480^3, 3072^{28}, 1280^{48}, 128^{48} \}$$
for $Z_{s,t}$, respectively
$$\{ 35942400^2, 20480^{22}, 3072^{40}, 1280^{64} \}$$
for~$Z'_{s,t}$. In the former case, all elements with centralizer order $128$
square to elements with centralizer order~$16384$, so lie in the class~$8$R. A 
look into the Atlas completes the proof.

(b) This follows from (a) and the character values of~$\chi^{+}$ on the 
respective elements.

(c) This follows from (b) and the trace formula 
$$\varphi^{\iota}_{M\iota}( \iota y_i \iota ) = 
\frac{1}{|V|} \sum_{v \in V}\varphi_M( y_iv )$$
given in~\cite[Subsection~$3.6$]{MNRW}. 
\end{proof}

\addtocounter{subsection}{1}
\subsection{Results of the condensation}
We apply the MeatAxe with generators of the algebra 
$\langle \iota \mathbb{F}G \iota, \iota y_1 \iota, \iota y_2 \iota \rangle$. 
Although the latter might be strictly contained in $\iota \mathbb{F}G.2 \iota$,
this suffices for our purposes. Indeed, the $720$-dimensional 
$\iota \mathbb{F}G.2 \iota$-composition factors of $\St^+\iota$ are irreducible 
as $\iota \mathbb{F}G \iota$-modules, and we can 
compute the traces of $\iota y_1 \iota$ and $\iota y_2 \iota$  on them.

Suppose first that $p = 3$. Then the module $\St^+ \iota$ has 
four composition factors of degrees $720$, which come in two isomorphism types
$720a$ and $720b$ and multiplicities~$1$ and~$3$, respectively. Moreover, the
traces of $\iota y_1 \iota$ on~$720a$ and~$720b$ are~$-1$ and~$1$, 
respectively. By Lemma~\ref{Average}(b) and Table~\ref{Mod3B1ApproxI}, we 
obtain $a = 3$.

Now suppose that $p = 7$, Then $\St^+ \iota$ has composition factors of 
degrees 
$$ 720, 720, 3711, 18555, 39900, 67466, $$
where the two composition factors of degree $720$ are not isomorphic. In fact,
these correspond to the composition factors of $\St^+$ with Brauer characters
$\chi_{20}^{+,\circ}$ and $\chi_{21}^{+,\circ}$, if $c = 1$, respectively
$\chi_{20}^{-,\circ}$ and $\chi_{21}^{-,\circ}$, if $c = 0$.
The trace of $\iota y_1 \iota$ on each of these two composition factors equals~$1$.
By Lemma~\ref{Average}(b) and Table~\ref{Mod7B1ApproxI}, we obtain $c = 1$.

\section*{Acknowledgements} 

It is our pleasure to thank Thomas Breuer for computing the character
tables of the maximal subgroups of $\Aut(F_4(2))$, and Klaus Lux for his
invaluable help with the MeatAxe64 and the MOC system. We also thank Frank 
Himstedt for independently checking the results of Lemma~\ref{Average}(a).

\bigskip
\bigskip

{\small
\noindent\textsc{Lehrstuhl f\"ur Algebra und Zahlentheorie, RWTH Aachen University,
52056 Aach\-en, Germany}

\noindent\textit{Email address: }\texttt{G.H.: gerhard.hiss@math.rwth-aachen.de}

\noindent\textit{Email address: }\texttt{F.L.: frank.luebeck@math.rwth-aachen.de}
}

\newpage


\clearpage

\appendix

\section{Tables}
The notation in the following tables is introduced in 
Subsections~\ref{TheBlocks} and~\ref{TheCharacters}. The parameters
in Tables~\ref{Mod3B1ApproxI},~\ref{Mod3B9Decmat}
and~\ref{Mod7B1ApproxI} are discussed in Subsection~\ref{Results}.

\begin{table}[h]
\caption{\label{Mod3B1RelsI} The relations in the principal $3$-block
with respect to the basic set $\text{\rm BS}_1$}
$$
{\footnotesize
\begin{array}{r|rrrrrrrrrrrrr} \hline\hline
\chi & \chi_{11}^0 & \chi_{18}^0 & \chi_{21}^- & \chi_{22}^- & 
                     \chi_{28}^0 & \chi_{32}^+ & \chi_{32}^- &
                     \chi_{35}^0 & \chi_{37}^+ & \chi_{37}^- &
                     \chi_{40}^0 & \chi_{52}^0 & \chi_{54}^0 
\rule[- 4pt]{0pt}{ 14pt} \\ \hline     
 1^+          &  1& -1 & . & . & -2& . & 1 & -1& 1 & . & -1& 1 & -1 \rule[ 0pt]{0pt}{ 10pt} \\
 1^-          &  1& -1 & . & . & -2& 1 & . & -1& . & 1 & -1& 1 & -1\\
 833^+        & -1&  1 & . & . & 1 & -1& -1& . & -1& -1& 1 & -2& . \\
 833^-        & -1&  1 & . & . & 1 & -1& -1& . & -1& -1& 1 & -2& . \\
 2210^0\,     & -1&  . & . & . & 2 & -1& -1& . & -1& -1& . & -2& . \\
 1326^+       & . &  . & . & . & 1 & . & . & 1 & . & -1& . & 1 & . \\
 1326^-       & . &  . & . & . & 1 & . & . & 1 & -1& . & . & 1 & . \\
 21658^+      & . &  1 & . & . & . & . & . & . & . & . & . & . & 1 \\
 21658^-      & . &  1 & . & . & . & . & . & . & . & . & . & . & 1 \\
 22932^+      & 1 &  . & . & . & -1& 1 & . & . & . & 1 & -1& 2 & . \\
 22932^-      & 1 &  . & . & . & -1& . & 1 & . & 1 & . & -1& 2 & . \\
 46410^0\,    & 1 & -1 & . & . & -2& 1 & 1 & -1& 1 & 1 & -1& 2 & . \\
 63700^+      & . &  1 & . & . & 1 & . & . & 1 & . & . & . & 1 & 1 \\
 63700^-      & . &  1 & . & . & 1 & . & . & 1 & . & . & . & 1 & 1 \\
 198900^0\,   & . &  1 & . & . & 1 & . & . & 1 & . & . & 1 & . & 1 \\
 183600^+     & . &  . & 1 & . & . & 1 & . & 1 & . & . & . & 2 & . \\
 183600^-     & . &  . & . & 1 & . & . & 1 & 1 & . & . & . & 2 & . \\
 433160^0\,   & . &  . & . & . & . & . & . & 1 & . & . & 1 & 1 & . \\
 249900^+     & . &  . & . & . & 1 & . & . & . & 1 & . & . & -1& . \\
 249900^-     & . &  . & . & . & 1 & . & . & . & . & 1 & . & -1& . \\
 270725^+     & . &  . & . & . & . & . & 1 & . & 1 & . & . & 1 & 1 \\
 270725^-     & . &  . & . & . & . & 1 & . & . & . & 1 & . & 1 & 1 \\
 1082900^0\,  & . &  . & . & . & . & . & . & . & . & . & 1 & -1& . \\
 1082900^+    & . &  . & . & . & . & . & . & . & . & . & . & . & -1\\
 1082900^-    & . &  . & . & . & . & . & . & . & . & . & . & . & -1\\
 2598960^0\,  & . &  . & . & . & . & . & . & . & . & . & . & . & . \\
 3898440^0\,  & . &  . & . & . & . & . & . & . & . & . & . & 1 & . \\
 3411968^+    & . &  . & . & . & . & . & . & . & . & . & . & . & 1 \\
 3411968^-    & . &  . & . & . & . & . & . & . & . & . & . & . & 1 \\
 16777216^+   & . &  . & . & . & . & . & . & . & . & . & . & . & . \\
 16777216^-   & . &  . & . & . & . & . & . & . & . & . & . & . & . 
\\ \hline\hline
\end{array}
}
$$
\end{table}

\addtocounter{table}{-1}
\begin{table}
\caption{\label{Mod3B1RelsII} (cont.) The relations in the principal 
$3$-block with respect to the basic set $\text{\rm BS}_1$}
$$
{\footnotesize
\begin{array}{r|cccccccccccc} \hline\hline
\chi & \chi_{56}^0 & \chi_{59}^0 & \chi_{61}^0 & \chi_{67}^0 &
       \chi_{72}^+ & \chi_{72}^- & \chi_{73}^0 & \chi_{75}^0 &
       \chi_{88}^0 & \chi_{91}^0 & \chi_{95}^+ & \chi_{95}^- 
\rule[- 4pt]{0pt}{ 14pt} \\ \hline
 1^+          & . & 1 & . & 1 & . & 1 & 1 & 1 & . & . & 1 & 1 \rule[ 0pt]{0pt}{ 10pt} \\ 
 1^-          & . & 1 & . & 1 & 1 & . & 1 & 1 & . & . & 1 & 1 \\ 
 833^+        & -1& -2& . & -1& -1& -1& -2& -2& . & . & -1& -1 \\ 
 833^-        & -1& -2& . & -1& -1& -1& -2& -2& . & . & -1& -1 \\ 
 2210^0\,     & -1& -2& . & -1& -1& -1& -2& -2& . & . & -1& -1 \\ 
 1326^+       & 1 & 1 & -1& . & 1 & . & 1 & . & . & -1& -1& . \\ 
 1326^-       & 1 & 1 & -1& . & . & 1 & 1 & . & . & -1& . & -1 \\ 
 21658^+      & . & -1& 1 & . & -1& . & . & 1 & -1& -1& . & -1 \\ 
 21658^-      & . & -1& 1 & . & . & -1& . & 1 & -1& -1& -1& . \\ 
 22932^+      & 1 & 1 & . & 1 & 1 & 1 & 2 & 2 & . & -1& 1 & . \\ 
 22932^-      & 1 & 1 & . & 1 & 1 & 1 & 2 & 2 & . & -1& . & 1 \\ 
 46410^0\,    & 1 & 2 & . & 1 & 1 & 1 & 2 & 2 & . & -1& 1 & 1 \\ 
 63700^+      & 1 & . & . & . & . & . & 1 & 1 & -1& -2& -1& -1 \\ 
 63700^-      & 1 & . & . & . & . & . & 1 & 1 & -1& -2& -1& -1 \\ 
 198900^0\,   & 1 & . & . & . & . & . & . & . & -1& -1& -1& -1 \\ 
 183600^+     & 1 & 2 & -1& 1 & 2 & . & 2 & 1 & . & -1& -1& 1 \\ 
 183600^-     & 1 & 2 & -1& 1 & . & 2 & 2 & 1 & . & -1& 1 & -1 \\ 
 433160^0\,   & 1 & 2 & -1& 1 & 1 & 1 & 1 & . & . & . & . & . \\ 
 249900^+     & . & -1& . & -1& . & -1& -1& -1& . & . & -1& . \\ 
 249900^-     & . & -1& . & -1& -1& . & -1& -1& . & . & . & -1 \\ 
 270725^+     & 1 & 1 & . & . & . & 1 & 1 & . & . & -1& . & . \\ 
 270725^-     & 1 & 1 & . & . & 1 & . & 1 & . & . & -1& . & . \\ 
 1082900^0\,  & . & . & . & . & . & . & -1& -1& . & 1 & . & . \\ 
 1082900^+    & . & 1 & . & 1 & 1 & . & 1 & 1 & . & . & . & 1 \\ 
 1082900^-    & . & 1 & . & 1 & . & 1 & 1 & 1 & . & . & 1 & . \\ 
 2598960^0\,  & . & . & 1 & 1 & . & . & 1 & 1 & -1& . & . & . \\ 
 3898440^0\,  & 1 & 1 & . & 1 & 1 & 1 & 2 & 1 & . & -1& . & . \\ 
 3411968^+    & . & . & 1 & . & . & . & . & 1 & -1& . & . & . \\ 
 3411968^-    & . & . & 1 & . & . & . & . & 1 & -1& . & . & . \\ 
 16777216^+   & . & . & . & . & . & . & . & . & 1 & 1 & 1 & . \\ 
 16777216^-   & . & . & . & . & . & . & . & . & 1 & 1 & . & 1 
\\ \hline\hline
\end{array}
}
$$
\end{table}

\begin{table}
\caption{\label{Mod3B1ApproxI} The parametrized basic set 
decomposition matrix of the principal $3$-block}
$$
{
\begin{array}{r|rrrrrrrrrrrrrrrr} \hline\hline
\Phi & 1 & 2 & 3 & 4 & 5 & 6 & 7 & 8 & 9 & 10 & 11 & 12 & 13 & 14 & 15 \rule[- 3pt]{0pt}{ 15pt} \\ \hline
        1^+   & 1 & . & . & . & . & . & . & . & . & . & . & . & . & . & . \rule[ 0pt]{0pt}{ 10pt} \\
        1^-   & . & 1 & . & . & . & . & . & . & . & . & . & . & . & . & . \\
      833^+   & . & . & 1 & . & . & . & . & . & . & . & . & . & . & . & . \\
      833^-   & . & . & . & 1 & . & . & . & . & . & . & . & . & . & . & . \\
     2210^0\, & . & . & . & . & 1 & . & . & . & . & . & . & . & . & . & . \\
     1326^+   & . & . & . & . & . & 1 & . & . & . & . & . & . & . & . & . \\
     1326^-   & . & . & . & . & . & . & 1 & . & . & . & . & . & . & . & . \\
    21658^+   & . & . & . & . & . & . & . & 1 & . & . & . & . & . & . & . \\
    21658^-   & . & . & . & . & . & . & . & . & 1 & . & . & . & . & . & . \\
    22932^+   & . & . & . & . & 1 & . & . & . & . & 1 & . & . & . & . & . \\
    22932^-   & . & . & . & . & 1 & . & . & . & . & . & 1 & . & . & . & . \\
    46410^0\, & . & . & 1 & 1 & . & . & . & . & . & . & . & 1 & . & . & . \\
    63700^+   & . & . & . & . & . & . & . & . & . & . & . & . & 1 & . & . \\
    63700^-   & . & . & . & . & . & . & . & . & . & . & . & . & . & 1 & . \\
   198900^0\, & 1 & 1 & . & . & . & . & . & . & . & . & . & 1 & . & . & 1 \\
   183600^+   & . & . & . & . & . & . & . & . & . & . & . & . & . & . & . \\
   183600^-   & . & . & . & . & . & . & . & . & . & . & . & . & . & . & . \\
   433160^0\, & . & . & 1 & 1 & . & . & . & . & . & . & . & . & . & . & . \\
   249900^+   & 1 & . & 1 & . & . & . & 1 & . & . & 1 & . & 1 & . & . & . \\
   249900^-   & . & 1 & . & 1 & . & 1 & . & . & . & . & 1 & 1 & . & . & . \\
   270725^+   & . & . & . & . & . & . & . & . & . & . & . & . & . & . & . \\
   270725^-   & . & . & . & . & . & . & . & . & . & . & . & . & . & . & . \\
  1082900^0\, & . & . & . & . & 2 & . & . & . & . & 1 & 1 & 1 & . & . & . \\
  1082900^+   & . & . & . & . & . & . & . & 1 & . & . & . & . & . & . & . \\
  1082900^-   & . & . & . & . & . & . & . & . & 1 & . & . & . & . & . & . \\
  2598960^0\, & . & . & . & . & . & 1 & 1 & . & . & . & . & . & 1 & 1 & 1 \\
  3898440^0\, & . & . & . & . & . & . & . & . & . & . & . & 1 & . & . & . \\
  3411968^+   & . & . & 1 & . & . & . & . & . & . & . & . & . & . & . & . \\
  3411968^-   & . & . & . & 1 & . & . & . & . & . & . & . & . & . & . & . \\
 16777216^+   & 1 & . & 1 & . & . & 1 & 1 & 1 & . & . & . & 1 & 2 & 2 & 1 \\
 16777216^-   & . & 1 & . & 1 & . & 1 & 1 & . & 1 & . & . & 1 & 2 & 2 & 1 
\\ \hline\hline
\end{array}
}
$$
\end{table}

\addtocounter{table}{-1}
\begin{table}
\caption{\label{Mod3B1ApproxII} (cont.) The parametrized basic set
decomposition matrix of the principal $3$-block}
$$
{
\begin{array}{r|rrrrrrrrrrrrrrrrr} \hline\hline
\Phi & 16 & 17 & 18 & 19 & 20 & 21 & 22 & 23 & 24 & 25 & 26 & 27 & 28 & 29 & 30 & 31 \rule[- 3pt]{0pt}{ 15pt} \\ \hline
          1^+   & . & . & . & . & . & . & . & . & . & . & . & . & . & . & . & . \rule[ 0pt]{0pt}{ 10pt} \\
          1^-   & . & . & . & . & . & . & . & . & . & . & . & . & . & . & . & . \\
        833^+   & . & . & . & . & . & . & . & . & . & . & . & . & . & . & . & . \\
        833^-   & . & . & . & . & . & . & . & . & . & . & . & . & . & . & . & . \\
       2210^0\, & . & . & . & . & . & . & . & . & . & . & . & . & . & . & . & . \\
       1326^+   & . & . & . & . & . & . & . & . & . & . & . & . & . & . & . & . \\
       1326^-   & . & . & . & . & . & . & . & . & . & . & . & . & . & . & . & . \\
      21658^+   & . & . & . & . & . & . & . & . & . & . & . & . & . & . & . & . \\
      21658^-   & . & . & . & . & . & . & . & . & . & . & . & . & . & . & . & . \\
      22932^+   & . & . & . & . & . & . & . & . & . & . & . & . & . & . & . & .   \\
      22932^-   & . & . & . & . & . & . & . & . & . & . & . & . & . & . & . & .   \\
      46410^0\, & . & . & . & . & . & . & . & . & . & . & . & . & . & . & . & .   \\
      63700^+   & . & . & . & . & . & . & . & . & . & . & . & . & . & . & . & .   \\
      63700^-   & . & . & . & . & . & . & . & . & . & . & . & . & . & . & . & .   \\
     198900^0\, & . & . & . & . & . & . & . & . & . & . & . & . & . & . & . & .   \\
     183600^+   & 1 & . & . & . & . & . & . & . & . & . & . & . & . & . & . & .   \\
     183600^-   & . & 1 & . & . & . & . & . & . & . & . & . & . & . & . & . & .   \\
     433160^0\, & . & . & 1 & . & . & . & . & . & . & . & . & . & . & . & . & .   \\
     249900^+   & . & . & . & 1 & . & . & . & . & . & . & . & . & . & . & . & .   \\
     249900^-   & . & . & . & . & 1 & . & . & . & . & . & . & . & . & . & . & .   \\
     270725^+   & . & . & . & . & . & 1 & . & . & . & . & . & . & . & . & . & .   \\
     270725^-   & . & . & . & . & . & . & 1 & . & . & . & . & . & . & . & . & .   \\
    1082900^0\, & . & . & . & . & . & . & . & 1 & . & . & . & . & . & . & . & .   \\
    1082900^+   & . & . & . & . & . & . & . & . & 1 & . & . & . & . & . & . & .   \\
    1082900^-   & . & . & . & . & . & . & . & . & . & 1 & . & . & . & . & . & .   \\
    2598960^0\, & . & . & . & . & . & . & . & . & . & . & 1 & . & . & . & . & .   \\
    3898440^0\, & . & . & . & 1 & 1 & . & . & 1 & . & . & . & 1 & . & . & . & .   \\
    3411968^+   & 2 & . & 1 & . & . & . & . & . & 1 & . & . & . & 1 & . & . & .   \\
    3411968^-   & . & 2 & 1 & . & . & . & . & . & . & 1 & . & . & . & 1 & . & .   \\
   16777216^+   & a & \tilde{a} & 1 & 1 & . & 1 & . & . & 2 & . & 1 & 1 & 1 & . & 1 & .   \\
   16777216^-   & \tilde{a} & a & 1 & . & 1 & . & 1 & . & . & 2 & 1 & 1 & . & 1 & . & 1   
\\ \hline\hline
\end{array}
}
$$
\end{table}

\begin{table}
\caption{\label{Mod3B1Projs} Some projective characters of the 
principal $3$-block}
$$
{
\begin{array}{r|rrrrrrrrr} \hline\hline
\Phi & 32 & 33 & 34 & 35 & 36 & 37 & 38 & 39 & 40 \rule[- 3pt]{0pt}{ 15pt} \\ \hline
        1^+   & 1 & . & . & . & . &  . & . & . & . \rule[ 0pt]{0pt}{ 10pt} \\ 
        1^-   & . & . & . & . & . &  . & . & . & . \\ 
      833^+   & . & 1 & . & . & . &  . & . & . & . \\ 
      833^-   & . & . & . & . & . &  . & . & . & . \\ 
     2210^0\, & 2 & . & . & . & . &  . & . & . & . \\ 
     1326^+   & . & . & 1 & . & . &  . & . & . & . \\ 
     1326^-   & . & . & . & . & . &  . & . & . & . \\ 
    21658^+   & . & . & . & 1 & . &  . & . & . & . \\ 
    21658^-   & . & . & . & . & . &  . & . & . & . \\ 
    22932^+   & 5 & . & . & . & 1 &  . & . & . & . \\ 
    22932^-   & 4 & . & . & . & . &  . & . & . & . \\ 
    46410^0\, & . & 1 & . & . & . &  . & . & . & . \\ 
    63700^+   & . & . & . & . & . &  . & . & . & . \\ 
    63700^-   & . & . & . & . & . &  . & . & . & . \\ 
   198900^0\, & 1 & . & . & . & . &  . & . & . & . \\ 
   183600^+   & . & . & . & . & . &  1 & . & . & . \\ 
   183600^-   & . & . & . & . & . &  . & . & . & . \\ 
   433160^0\, & . & 1 & . & . & . &  . & . & . & . \\ 
   249900^+   & 4 & 1 & . & . & 1 &  . & 1 & . & . \\ 
   249900^-   & 2 & . & 1 & . & . &  . & . & . & . \\ 
   270725^+   & . & . & . & . & 1 &  . & . & . & . \\ 
   270725^-   & . & . & . & . & . &  . & . & . & . \\ 
  1082900^0\, & 9 & . & . & . & 2 &  . & . & . & . \\ 
  1082900^+   & . & . & . & 1 & . &  . & . & 1 & . \\ 
  1082900^-   & . & . & . & . & . &  . & . & . & . \\ 
  2598960^0\, & . & . & 1 & . & . &  1 & . & . & . \\ 
  3898440^0\, & . & . & . & . & 1 &  1 & 1 & . & . \\ 
  3411968^+   & . & 2 & . & . & . &  3 & . & 1 & 1 \\ 
  3411968^-   & . & . & . & 1 & . &  . & . & . & . \\ 
 16777216^+   & 1 & 2 & 1 & 1 & 1 & 10 & 2 & 2 & 1 \\ 
 16777216^-   & . & . & 1 & 1 & . &  6 & . & . & . 
\\ \hline\hline
\end{array}
}
$$
\end{table}

\begin{table}
\caption{\label{Mod3B2Decmat} The decomposition matrix of
the $3$-block $B_2$}
$
{
\begin{array}{r|rrrrrrr} \hline\hline
\Phi & 1 & 2 & 3 & 4 & 5 & 6 & 7 \rule[- 3pt]{0pt}{ 15pt} \\ \hline
      1377^+   & 1 & . & . & . & . & . & . \rule[ 0pt]{0pt}{ 10pt} \\
      1377^-   & . & 1 & . & . & . & . & . \\       
    269892^+   & . & . & 1 & . & . & . & . \\       
    269892^-   & . & . & . & 1 & . & . & . \\       
    716040^0\, & 1 & 1 & . & . & 1 & . & . \\       
   1253070^0\, & . & . & 1 & 1 & 1 & . & . \\       
  10024560^0\, & . & . & . & . & 1 & 1 & 1 \\       
   5640192^+   & 1 & . & 1 & . & 1 & 1 & . \\       
   5640192^-   & . & 1 & . & 1 & 1 & . & 1        
\\ \hline\hline
\end{array}
}
$
\end{table}

\bigskip
\bigskip

\begin{table}
\caption{\label{Mod3B9Decmat} The parametrized decomposition matrix of
the $3$-block $B_9$}
$
{
\begin{array}{r|rrrrrrr} \hline\hline
\Phi & 1 & 2 & 3 & 4 & 5 & 6 & 7 \rule[- 3pt]{0pt}{ 15pt} \\ \hline
    877149^+   & 1 & . & . & . & . & . & . \rule[ 0pt]{0pt}{ 10pt} \\
    877149^-   & . & 1 & . & . & . & . & . \\
    877149^+   & . & . & 1 & . & . & . & . \\
    877149^-   & . & . & . & 1 & . & . & . \\
   8771490^0\, & 1 & 1 & . & . & 1 & . & . \\
   8771490^0\, & . & . & 1 & 1 & 1 & . & . \\
  17542980^0\, & . & . & . & . & 1 & 1 & 1 \\
  14034384^+   & b & \tilde{b} & 1 & . & 1 & 1 & . \\
  14034384^-   & \tilde{b} & b & . & 1 & 1 & . & 1
\\ \hline\hline
\end{array}
}
$
\end{table}

\begin{table}
\caption{\label{Mod7B1ApproxI} The parametrized decomposition matrix of the principal $7$-block}
$$
{
\begin{array}{r|rrrrrrrrrrrrrrrr} \hline\hline
\Phi & 1 & 2 & 3 & 4 & 5 & 6 & 7 & 8 & 9 & 10 & 11 & 12 & 13 & 14 & 15 & 16 \rule[- 3pt]{0pt}{ 15pt} \\ \hline
       1^+   & 1 & . & . & . & . & . & . & . & . & . & . & . & . & . & . & . \rule[ 0pt]{0pt}{ 10pt} \\
       1^-   & . & 1 & . & . & . & . & . & . & . & . & . & . & . & . & . & . \\
    2210^0\, & 1 & 1 & 1 & . & . & . & . & . & . & . & . & . & . & . & . & . \\
    1326^+   & . & . & . & 1 & . & . & . & . & . & . & . & . & . & . & . & . \\
    1326^-   & . & . & . & . & 1 & . & . & . & . & . & . & . & . & . & . & . \\
    1377^+   & . & . & . & . & . & 1 & . & . & . & . & . & . & . & . & . & . \\
    1377^-   & . & . & . & . & . & . & 1 & . & . & . & . & . & . & . & . & . \\
   88400^0\, & . & . & . & . & . & 1 & 1 & 1 & . & . & . & . & . & . & . & . \\
  198900^0\, & . & . & 1 & . & . & . & . & . & 1 & . & . & . & . & . & . & . \\
  183600^+   & . & . & . & . & . & . & . & . & . & 1 & . & . & . & . & . & . \\
  183600^-   & . & . & . & . & . & . & . & . & . & . & 1 & . & . & . & . & . \\
  183600^+   & . & . & . & . & . & . & . & . & . & . & . & 1 & . & . & . & . \\
  183600^-   & . & . & . & . & . & . & . & . & . & . & . & . & 1 & . & . & . \\
  322218^+   & 1 & . & 1 & . & . & . & . & . & . & . & . & . & . & 1 & . & . \\
  322218^-   & . & 1 & 1 & . & . & . & . & . & . & . & . & . & . & . & 1 & . \\
  716040^0\, & . & . & . & . & . & . & . & 1 & . & . & . & . & . & . & . & 1 \\
  947700^+   & . & . & . & . & . & a & \tilde{a} & 1 & . & . & . & . & . & . & . & . \\
  947700^-   & . & . & . & . & . & \tilde{a} & a & 1 & . & . & . & . & . & . & . & . \\
 2685150^0\, & . & . & . & 1 & 1 & . & . & . & 1 & . & . & . & . & . & . & . \\
 5657600^0\, & . & . & . & . & . & . & . & 1 & . & 1 & 1 & 1 & 1 & . & . & 1 \\
 9052160^0\, & . & . & 1 & . & . & . & . & . & 1 & . & . & . & . & 1 & 1 & . \\
 5640192^+   & . & . & . & 1 & . & . & . & . & . & b & \tilde{b} & b & \tilde{b} & . & . & . \\
 5640192^-   & . & . & . & . & 1 & . & . & . & . & \tilde{b} & b & \tilde{b} & b & . & . & . \\
16110900^0\, & . & . & . & . & . & . & . & . & 1 & 1 & 1 & 1 & 1 & . & . & 1 \\
21481200^0\, & . & . & . & . & . & . & . & . & . & 1 & 1 & 1 & 1 & . & . & 1 \\
16777216^+   & . & . & . & . & . & . & . & . & . & c & \tilde{c} & c & \tilde{c} & 1 & . & . \\
16777216^-   & . & . & . & . & . & . & . & . & . & \tilde{c} & c & \tilde{c} & c & . & 1 & . 
\\ \hline\hline
\end{array}
}
$$
\end{table}

\addtocounter{table}{-1}
\begin{table}
\caption{\label{Mod7B1ApproxII} (cont.) The parametrized decomposition matrix of the principal $7$-block}
$$
{
\begin{array}{r|rrrrrrrr|rr} \hline\hline
\Phi & 17 & 18 & 19 & 20 & 21 & 22 & 23 & 24 & 25 & 26
\rule[- 3pt]{0pt}{ 15pt} \\ \hline
       1^+   & . & . & . & . & . & . & . & . & . & . \rule[ 0pt]{0pt}{ 10pt} \\
       1^-   & . & . & . & . & . & . & . & . & . & . \\
    2210^0\, & . & . & . & . & . & . & . & . & . & . \\
    1326^+   & . & . & . & . & . & . & . & . & . & . \\
    1326^-   & . & . & . & . & . & . & . & . & . & . \\
    1377^+   & . & . & . & . & . & . & . & . & 1 & . \\
    1377^-   & . & . & . & . & . & . & . & . & . & . \\
   88400^0\, & . & . & . & . & . & . & . & . & 1 & . \\
  198900^0\, & . & . & . & . & . & . & . & . & . & . \\
  183600^+   & . & . & . & . & . & . & . & . & . & 1 \\
  183600^-   & . & . & . & . & . & . & . & . & . & . \\
  183600^+   & . & . & . & . & . & . & . & . & . & 1 \\
  183600^-   & . & . & . & . & . & . & . & . & . & . \\
  322218^+   & . & . & . & . & . & . & . & . & . & . \\
  322218^-   & . & . & . & . & . & . & . & . & . & . \\
  716040^0\, & . & . & . & . & . & . & . & . & . & . \\
  947700^+   & 1 & . & . & . & . & . & . & . & 1 & . \\
  947700^-   & . & 1 & . & . & . & . & . & . & 1 & . \\
 2685150^0\, & . & . & 1 & . & . & . & . & . & . & 1 \\
 5657600^0\, & 1 & 1 & 1 & . & . & . & . & . & 1 & 3 \\
 9052160^0\, & . & . & . & 1 & . & . & . & . & . & 4 \\
 5640192^+   & d & \tilde{d} & 1 & . & 1 & . & . & . & 1 & 4 \\
 5640192^-   & \tilde{d} & d & 1 & . & . & 1 & . & . & . & 2 \\
16110900^0\, & . & . & 1 & 1 & 1 & 1 & . & . & . & 9 \\
21481200^0\, & . & . & . & 1 & . & . & 1 & 1 & . & 14\\
16777216^+   & . & . & . & 1 & 1 & . & 1 & . & . & 11\\
16777216^-   & . & . & . & 1 & . & 1 & . & 1 & . & 9 
\\ \hline\hline
\end{array}
}
$$
\end{table}

\end{document}